\documentclass[12pt]{article}

\usepackage{amsmath,amssymb,amsthm}
\usepackage[T1]{fontenc}
\usepackage{newtxtext,newtxmath}
\usepackage[a4paper,margin=1in]{geometry}
\usepackage{setspace}
\usepackage{tikz}
\usetikzlibrary{shapes.geometric}
\usepackage{mathtools}
\usepackage{enumitem}
\usepackage[hidelinks]{hyperref}
\usepackage{microtype}
\usepackage{xcolor}
\usepackage[abbrev,msc-links,nobysame,backrefs]{amsrefs}  
\usetikzlibrary{decorations.pathmorphing}
\usepackage{doi}
\usepackage{float} 

\hypersetup{
    colorlinks=true,
    linkcolor=blue,
    citecolor=blue,
    urlcolor=blue
}
\newtheorem{theorem}{Theorem}[section]
\newtheorem{lemma}[theorem]{Lemma}
\newtheorem{corollary}[theorem]{Corollary}
\newtheorem{proposition}[theorem]{Proposition}
\newtheorem{problem}[theorem]{Problem}
\newtheorem{claim}[theorem]{Claim}
\newtheorem{observation}[theorem]{Observation}

\title{On the maximum number of edges of $k$-cacti}
\author{
 Yuanqiu Huang, Licheng Zhang\thanks{Corresponding author}\\[3pt]
\small School of Mathematics and Statistics, Hunan Normal University\\
\small Changsha, China\\
\small \texttt{hyqq@hunnu.edu.cn}, \texttt{lczhangmath@hunnu.edu.cn}
}

\date{}
\begin{document}
\maketitle

\begin{abstract}
A cactus is a graph in which every edge lies on at most one cycle. In 2024, Zhang and Huang generalized this concept to the $k$-cactus, defined as a graph in which every edge lies on at most $k$ cycles [Discrete Applied Mathematics 348 (2024) 184–191].   It is known that any  cactus on $n$ vertices has  at most $\lfloor\frac{3}{2}(n-1)\rfloor$ edges. However, the upper bound on the size of $k$-cacti was known only for $k\le 4$. In
this note we consider general $k$.  We prove
that every $n$-vertex $k$-cactus has
$O\!\left(\frac{\log k}{\sqrt{\log\log k}}\,n\right)$ edges for all
sufficiently large $k$, and a construction shows this is optimal
up to a factor of $\sqrt{\log\log k}$.
\end{abstract}

\textbf{MSC: 05C35; 05C83 }


\section{Introduction}

Throughout this paper, all graphs are finite, undirected and simple, and are not
required to be connected. 
Let $e$ denote the base of the natural logarithm ($e = 2.718\ldots$).
By $\log k$ we mean the logarithm of $k$ to the base $e$, that is, $\log_e k$.

A \emph{forest} is a graph that contains no cycle; equivalently, it is a
disjoint union of trees, and a \emph{cactus} is a graph in which each
edge lies on at most one cycle. In combinatorial optimization and chemical graph theory, cacti (or directed cacti) are an important class of graphs whose many interesting properties have been extensively studied; see  \cite{MR3599463,MR5017744,MR4281871, MR3969328,MR4208158} for example.
As a natural generalization of both forests and cacti, we call a graph a \emph{$k$-cactus} (for an integer $k\ge 0$) if each of its edges lies on
at most $k$ cycles; this notion was introduced in~\cite{MR4700794}. Thus
a forest is precisely a $0$-cactus and a cactus is a $1$-cactus. Although the notion of a $k$-cactus is relatively new, the underlying idea of
studying a graph through the cycles passing through its edges is an old
one. For example, Toida~\cite{MR0332580} showed
that a graph is Eulerian if and only if every edge lies on an odd number
of cycles. Mac Lane~\cite{MR1546002}
showed that a graph is planar if and only if it has a cycle basis in
which every edge lies in at most two of the basis cycles. More generally, the basis number of a graph is the least
integer $k$ such that the graph has a cycle basis in which every edge
lies in at most $k$ of the basis cycles. In 2026, Geniet and
Giocanti~\cite{GenietGiocanti} showed that the basis number is bounded
by a constant on every proper minor-closed class of graphs.

A forest on $n$ vertices has at most $n-1$ edges, with equality if and
only if it is a tree, and it is well known
that every cactus on $n$ vertices has at most $\lfloor 3(n-1)/2\rfloor$
edges \cite{West2001}. It is natural to seek the analogous upper bound on the number of edges for 
$k$-cacti of a given order. This question was raised in~\cite{MR4700794}, where tight bounds were obtained for $2\le k\le 4$ and the corresponding extremal graphs were characterized. The authors also showed that every $2$-connected $k$-cactus on $n$
vertices was shown to have at most $n+k-1$ edges, with equality when
$n\ge k+2$. The notion of a $k$-cactus has since attracted further attention.
Voblyi~\cite{Voblyi2025Eulerian3Cacti} enumerated labeled Eulerian
$3$-cacti, obtaining exact and asymptotic formulas for their number.
Wu, Chen and Li~\cite{WuChenLi2026} determined the graphs of
maximum spectral radius among all $n$-vertex $2$-cacti and among all  $n$-vertex  $3$-cacti.

However, for general $k$, the extremal problem concerning the maximum number of edges of a $k$-cactus of a given order has not been settled yet.
The purpose of this note is to fill this gap.
As a warm-up, we first observe that a direct argument based on the
block decomposition gives a bound of order $\sqrt{k}\,n$ for the size of
a connected $n$-vertex $k$-cactus (Proposition~\ref{prop:sqrt-bound});
this serves as a benchmark for what follows. To improve the naive upper bound, we start from a key observation: in $K_r$, every edge belongs to exactly $\lfloor e(r-2)!\rfloor-1$ cycles. Hence, a $k$-cactus cannot contain a $K_r$-minor whenever $\lfloor e(r-2)!\rfloor-1 > k$. The local edge-cycle condition thus forces the exclusion of a complete minor whose size depends only on $k$. Finally,  applying the well-known Kostochka--Thomason theorem (Lemma \ref{lem:KT}) gives the bound $\dfrac{\log k}{\sqrt{\log\log k}}\,n$ on the number of edges of a $k$-cactus.



Our main result is as follows.
\begin{theorem}\label{thm:minor-bound}
Let $G$ be an $n$-vertex $k$-cactus, and set
$
        R(k)=\min\{r\ge 3:\ \lfloor e(r-2)!\rfloor-1>k\}.
$
Then $G$ contains no $K_{R(k)}$ minor, and 
$
        |E(G)|\le c\,R(k)\sqrt{\log R(k)}\,n,
$
where $c>0$ is an absolute constant.
\end{theorem}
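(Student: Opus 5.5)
The proof has two parts: a cycle count in $K_r$, and a transfer of cycles from a $K_r$ minor back to $G$ that keeps them distinct and through a common edge. After that, the edge bound follows from Lemma~\ref{lem:KT}.

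\emph{Step 1: counting cycles in $K_r$.} Fix an edge $uv$ of $K_r$. A cycle through $uv$ is determined by a $u$--$v$ path of length at least $2$ avoiding the edge $uv$. Such a path is an ordered sequence of $j\ge 1$ distinct inner vertices chosen from the other $r-2$ vertices. The number of such cycles is therefore
\[
\sum_{j=1}^{r-2}\frac{(r-2)!}{(r-2-j)!}=(r-2)!\sum_{i=0}^{r-3}\frac1{i!}.
\]
The full sum $(r-2)!\sum_{i=0}^{r-2}1/i!$ equals $\lfloor e(r-2)!\rfloor$, by the standard tail estimate $(r-2)!\sum_{i>r-2}1/i!<1$. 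Our sum omits only the $i=r-2$ term, which contributes $1$, so the count is $\lfloor e(r-2)!\rfloor-1$ for $r\ge 3$.

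\emph{Step 2: no $K_{R(k)}$ minor.} Suppose $G$ has a $K_r$ minor with $r=R(k)$. Take branch sets $B_1,\dots,B_r$, each inducing a connected subgraph, with an edge $e_{ij}$ of $G$ joining $B_i$ and $B_j$ for every pair $i\ne j$.

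Fix the $K_r$-edge $12$ and its representative $e_{12}=xy$. Lift each cycle $C$ of $K_r$ through $12$ to a cycle of $G$, as follows. Inside $B_i$, for each branch set $B_i$ visited by $C$, connect the two endpoints of the representative edges of $C$ incident to $B_i$ by a path. Choosing these paths canonically, e.g.\ in a fixed spanning tree $T_i$ of $G[B_i]$, the lift is a genuine cycle: the branch sets are disjoint and each is entered and left exactly once.

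Distinct cycles of $K_r$ give distinct lifts. The set of representative edges used by a lift recovers the edge set of $C$, because a lift contains exactly those inter-branch edges. Every lift contains $xy$. Hence $xy$ lies on at least $\lfloor e(r-2)!\rfloor-1>k$ cycles of $G$, contradicting that $G$ is a $k$-cactus.

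The main point to check is that the lifts really are cycles and are pairwise distinct. One degenerate case: when the two endpoints in $B_i$ coincide, the path inside $B_i$ is trivial, which is harmless. Alternatively, one can avoid lifting: the $k$-cactus property is closed under deleting edges and vertices, and contracting an edge does not increase the number of cycles through any remaining edge. Indeed, each cycle through an edge after contraction comes from at least one cycle through it before contraction, injectively, where parallel edges and loops are suppressed appropriately. I would present the direct lifting argument, since it is cleaner.

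\emph{Step 3: edge bound.} Step 2 shows $G$ is $K_{R(k)}$-minor-free. Lemma~\ref{lem:KT} (Kostochka--Thomason) then states that such graphs have average degree at most $c' R\sqrt{\log R}$. Therefore $|E(G)|\le c\,R(k)\sqrt{\log R(k)}\,n$ with an absolute constant $c$. This holds for all $n$, including disconnected $G$, since the lemma applies to every graph.
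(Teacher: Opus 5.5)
Your proposal is correct and follows essentially the same route as the paper. The paper likewise counts cycles through an edge of $K_r$ as $\lfloor e(r-2)!\rfloor-1$, lifts cycles through a $K_r$-model (distinguishing them by their representative edges, its Proposition~\ref{lem:lift}) to rule out a $K_{R(k)}$ minor, and then applies the Kostochka--Thomason theorem (Lemma~\ref{lem:KT}) to get the edge bound.
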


We can estimate $R(k)$ yielding the following explicit bound. For real-valued functions $f$ and $g$ defined for all sufficiently large
$x$, with $g(x)>0$, we write $f(x)=O(g(x))$ if there are constants $C>0$
and $x_0$ such that $|f(x)|\le C\,g(x)$ for all $x\ge x_0$, and
$f(x)=\Omega(g(x))$ if there are constants $c'>0$ and $x_0$ such that
$f(x)\ge c'\,g(x)$ for all $x\ge x_0$.

\begin{corollary}\label{cor:minor-explicit}
For all sufficiently large $k$, every $n$-vertex $k$-cactus $G$
satisfies
\[
        |E(G)|=O\!\left(\frac{\log k}{\sqrt{\log\log k}}\,n\right).
\]
\end{corollary}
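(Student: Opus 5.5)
The plan is to combine Theorem~\ref{thm:minor-bound} with an asymptotic estimate of $R(k)$ obtained from Stirling's formula. By Theorem~\ref{thm:minor-bound}, $|E(G)|\le c\,R(k)\sqrt{\log R(k)}\,n$. It therefore suffices to show
\[
R(k)=O\!\left(\frac{\log k}{\log\log k}\right).
\]
Granting this, $\log R(k)=\log\log k-\log\log\log k+O(1)=(1+o(1))\log\log k$. Hence $R(k)\sqrt{\log R(k)}=O\bigl(\frac{\log k}{\log\log k}\sqrt{\log\log k}\bigr)=O\bigl(\frac{\log k}{\sqrt{\log\log k}}\bigr)$, which is the claimed bound.

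To bound $R(k)$, set $m=R(k)-1\ge 3$ for large $k$. By minimality of $R(k)$, the index $r=m$ fails the defining condition (if $m\ge 3$), so $\lfloor e(m-2)!\rfloor-1\le k$. This gives $(m-2)!\le e(m-2)!\le k+2$. Taking logarithms and using $\log(j!)\ge j\log j-j$, with $j=m-2$, yields
\[
j\log j - j\le \log(k+2).
\]
So $j(\log j-1)\le 2\log k$ for large $k$. Since $R(k)\to\infty$ as $k\to\infty$ (because $\lfloor e(r-2)!\rfloor-1$ is finite for each fixed $r$), $j$ is large. Thus $\log j-1\ge \tfrac12\log j$, and we get $j\log j\le 4\log k$.

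It remains to invert $x\log x\le 4\log k$. Suppose $j> 8\log k/\log\log k$. Then $\log j\ge \log\log k+\log 8-\log\log\log k\ge \tfrac12\log\log k$ for large $k$. Consequently $j\log j> 4\log k$, a contradiction. Hence $j\le 8\log k/\log\log k$, and so $R(k)=j+3=O(\log k/\log\log k)$.

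The main obstacle is this inversion step, where the $\log\log\log k$ term must be absorbed carefully. It is routine once the threshold is chosen with a constant-factor cushion as above. The remaining care is bookkeeping: $j$ is large, $\log\log k>0$, and $k\ge k_0$. All of these are covered by the phrase ``for all sufficiently large $k$''.
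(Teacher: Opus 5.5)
Your proof is correct. At the level of the corollary it matches the paper: combine Theorem~\ref{thm:minor-bound} with $R(k)=O(\log k/\log\log k)$, deduce $\log R(k)=O(\log\log k)$, and multiply.

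The only real difference is how you prove the estimate on $R(k)$.

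\emph{The paper's route.} Lemma~\ref{lem:R-upper} bounds the minimum by exhibiting a witness. For $m=\lceil 3\log k/\log\log k\rceil$ it checks $\log(m!)\ge m(\log m-1)\ge\frac32\log k$. Hence $m!>k$, the index $m+2$ is admissible, and $R(k)\le m+2$ immediately.

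\emph{Your route.} You use minimality from the other side: $R(k)-1$ is not admissible, so $(R(k)-3)!<k+2$. You then invert $j\log j\le 4\log k$ by contradiction.

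\emph{Comparison.} Both arguments rest on the same inequality $\log(j!)\ge j\log j-j$ and give the same order. The witness argument is a little more economical. It does not need the auxiliary fact $R(k)\to\infty$, which you use to get $\log j-1\ge\frac12\log j$, and it needs no separate inversion step. It also gives a better explicit constant: $\lceil 3\log k/\log\log k\rceil+2$ versus your $8\log k/\log\log k+3$.

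One small overstatement. From an upper bound on $R(k)$ alone you cannot assert $\log R(k)=\log\log k-\log\log\log k+O(1)$, or $(1+o(1))\log\log k$, as two-sided estimates. Only $\log R(k)\le\log\log k+O(1)$ follows. That one-sided bound is all your final step uses, so the argument is unaffected.
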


This is close to best possible: coalescing many copies of a complete
graph gives, for infinitely many $n$, $k$-cacti with
$\Omega\!\left(\frac{\log k}{\log\log k}\,n\right)$ edges
(Proposition~\ref{cor:lower-log}). Thus only the power of $\log\log k$
remains to be determined.

We comment that the implicit constant in the $O$-notation can in principle be made explicit. 
For sufficiently large $k$, the proof of Lemma 4.3 gives the bound
$
        R(k) \le \left\lceil 3 \frac{\log k}{\log\log k} \right\rceil + 2.
$
Using this value as an upper bound in Theorem~1.1 and choosing a conservative constant $c=1$ 
yields a safe explicit estimate
$
        |E(G)| \le 2 \, \frac{\log k}{\sqrt{\log\log k}} \, n.
$
We do not attempt to optimize this numerical constant; our focus is on the asymptotic 
dependence of the coefficient of $n$ on $k$.


\subsection{Preliminaries}
In this subsection, we give some definitions, notions and technical lemmas.  For a graph $G$, we write
$V(G)$ and $E(G)$ for its vertex set and edge set, and we call $|E(G)|$
the \emph{size} of $G$. A \emph{cycle} is a connected $2$-regular
subgraph. A \emph{cut-vertex} of a connected graph is a vertex whose deletion
disconnects it. A \emph{block} of a graph is a maximal connected subgraph
without a cut-vertex; thus every block is either a single edge or
a maximal $2$-connected subgraph. The blocks of a graph partition its
edge set.

Let $H$ be a graph. The graph $H$ is a \emph{minor} of a graph $G$ if $H$ can
be obtained from $G$ by a sequence of vertex deletions, edge deletions, and
edge contractions. An \emph{$H$-model} in a graph $G$ consists of a subgraph $M \subseteq G$ and a partition of $V(M)$ into $|V(H)|$ nonempty \emph{branch sets} $X_v$ (one for each vertex $v \in V(H)$), satisfying:
\begin{itemize}
  \item For each $v \in V(H)$, the branch set $X_v$ induces a connected subgraph in $M$;
  \item for each edge $uv \in E(H)$, there is at least one edge in $M$ joining $X_u$ and $X_v$.
\end{itemize}
For each $uv \in E(H)$ we fix one such edge, called the \emph{representative edge} $\widehat{uv}$.


The following lemma gives an equivalent characterization of a graph $G$ containing an $H$-minor.
\begin{lemma}[see {\cite[Section~1.7]{MR4874150}}]\label{lem:model}
A graph $H$ is a minor of $G$ if and only if $G$ contains an $H$-model.
\end{lemma}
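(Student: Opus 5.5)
We prove the two implications separately. In each direction the idea is to follow the branch sets through the minor operations.

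\emph{From a model to a minor.} Suppose $M\subseteq G$ with branch sets $X_v$ ($v\in V(H)$) is an $H$-model.
\begin{enumerate}
  \item Delete every vertex of $G$ outside $V(M)$, and every edge not in $M$. This leaves exactly $M$.
  \item For each $v$, fix a spanning tree $T_v$ of $M[X_v]$; it exists because $X_v$ induces a connected subgraph of $M$.
  \item Delete the edges of $M$ inside $X_v$ that are not in $T_v$. Then contract the edges of $T_v$ one by one.
\end{enumerate}
Contracting the edges of a tree collapses its whole vertex set to a single vertex. So after step 3 every $X_v$ has become one vertex $x_v$, and distinct branch sets give distinct vertices, since the $X_v$ partition $V(M)$.

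Parallel edges created by contraction are identified, because we work with simple graphs. For every $uv\in E(H)$, the representative edge $\widehat{uv}$ survives as an edge $x_ux_v$, because no contraction uses it. The resulting graph is therefore a simple graph on $\{x_v\}$ containing a copy of $H$. Deleting the edges $x_ux_v$ with $uv\notin E(H)$ yields $H$, so $H$ is a minor of $G$.

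\emph{From a minor to a model.} Suppose $H$ arises from $G$ by a sequence $G=G_0,G_1,\dots,G_t=H$ of single deletions and contractions. We prove by induction on $i$ the following invariant for each $G_i$. There are pairwise disjoint nonempty sets $Y_w\subseteq V(G)$, one for each $w\in V(G_i)$, such that:
\begin{itemize}
  \item each $G[Y_w]$ is connected;
  \item for every edge $ww'\in E(G_i)$ there is an edge of $G$ between $Y_w$ and $Y_{w'}$.
\end{itemize}
For $G_0$ take $Y_w=\{w\}$. The inductive step splits by the type of operation.
\begin{itemize}
  \item \emph{Vertex deletion.} Drop the set $Y_w$ of the deleted vertex; the invariant clearly persists.
  \item \emph{Edge deletion.} Change nothing; requirements on edges only become weaker.
  \item \emph{Contraction of $ww'$ into a new vertex $z$.} Set $Y_z=Y_w\cup Y_{w'}$. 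This set is connected, because each part is connected and, by the invariant, some edge of $G$ joins them. Any edge $zy$ of the new graph came from $wy$ or $w'y$, so it inherits a $G$-edge between $Y_z$ and $Y_y$.
\end{itemize}

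At the end we have sets $X_v:=Y_v$ for $v\in V(H)$. Choose a spanning tree of each $G[X_v]$, and for each $uv\in E(H)$ one $G$-edge $\widehat{uv}$ between $X_u$ and $X_v$. Let $M$ be the union of these trees and edges. Then $V(M)=\bigcup_v X_v$ is partitioned by the $X_v$, each $X_v$ induces a connected subgraph of $M$ (it contains the spanning tree), and every $uv\in E(H)$ is realised by $\widehat{uv}$. So $M$ is an $H$-model.

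The only point needing care is the contraction step of the second direction: the union of the two branch sets must stay connected, and this holds because the invariant supplies a joining edge. Everything else is bookkeeping.
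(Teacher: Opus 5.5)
Your proof is correct in both directions: contracting a spanning tree of each branch set (and then deleting surplus edges) turns the model into $H$, and the invariant you carry along the deletion/contraction sequence yields a model, with the joining $G$-edge keeping $Y_w\cup Y_{w'}$ connected (identify $G_t$ with $H$ via the implicit isomorphism). The paper gives no proof of its own and simply cites Diestel's \emph{Graph Theory}, Section~1.7, where the equivalence is established by essentially this same standard argument, so your proposal matches the intended justification.
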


\begin{lemma}[Kostochka-Thomason theorem \cite{MR779891}, \cite{MR735367}, \cite{MR1814910}]
\label{lem:KT}
There is an absolute constant $c>0$ such that every $n$-vertex graph with
more than $c\,r\sqrt{\log r}\,n$ edges contains a $K_r$-minor.
\end{lemma}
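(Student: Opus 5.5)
The plan is the classical minimal-minor argument of Mader, Kostochka and Thomason. It reduces the statement to finding a large complete minor inside a small, dense neighbourhood graph, and then builds the $K_r$-model there with random branch sets of size about $\sqrt{\log r}$. Put $d=c\,r\sqrt{\log r}$, with $c$ to be fixed at the end, and assume $|E(G)|>d\,n$. By Lemma~\ref{lem:model} it suffices to find a $K_r$-model in some minor of $G$.

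\textbf{Step 1 (reduction to a minor-minimal graph).} Among all minors $H$ of $G$ with $|E(H)|\ge d|V(H)|$, choose one that is minimal in the minor order; $G$ itself qualifies, so $H$ exists. Minimality gives three facts.
\begin{enumerate}[label=(\alph*)]
\item Deleting an edge destroys the property, so $|E(H)|<d|V(H)|+1$.
\item Contracting an edge $uv$ loses $1+|N(u)\cap N(v)|$ edges and one vertex. Minimality therefore forces every edge to lie in more than $d-1$ triangles.
\item Deleting a vertex similarly shows that $\delta(H)>d$.
\end{enumerate}
By (a), the average degree is below $2d+2$, so some vertex $v$ has $\deg v\le 2d+1$. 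Let $F=H[N(v)]$ and $m=|V(F)|$, so $d<m\le 2d+1$. Fact (b) says every vertex of $F$ has degree at least $d-1\ge m/2-O(1)$ inside $F$. Thus $F$ is a graph on $m=\Theta(r\sqrt{\log r})$ vertices with minimum degree essentially $m/2$. A $K_r$-minor in $F$ is a $K_r$-minor in $G$.

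\textbf{Step 2 (random branch sets).} Set $t=\lceil C\sqrt{\log r}\rceil$. Choose $r$ disjoint random $t$-subsets $A_1,\dots,A_r$ of $V(F)$. Choosing $c$ large makes $rt\le m/4$, so at least $3m/4$ vertices of $F$ remain as a reserve pool $P$. Since $F$ has density about $1/2$, a fixed pair $A_i,A_j$ fails to have an edge between them with probability roughly $2^{-t^2}$, which is $r^{-\Omega(C^2)}$. A union bound over the $\binom r2$ pairs then shows that with positive probability every pair of sets is joined by an edge. I will make this precise by exposing the sets sequentially, using the minimum-degree condition rather than a crude density heuristic.

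\textbf{Step 3 (making branch sets connected: the main obstacle).} Each $A_i$ must be enlarged to a connected set using disjoint vertices from $P$, and this is where I expect the difficulty. Minimum degree about $m/2$ does not by itself guarantee that two vertices of $F$ have a common neighbour. To handle this I would apply the triangle condition (b) once more inside $F$: every edge of $F$ is in many triangles of $H$. I would first try a greedy procedure that links the $t$ vertices of $A_i$ through short paths of length at most $2$ or $3$ in $F[P]$. I would aim to show that at each stage fewer than $m/8$ pool vertices have been used. Since the pool then retains a constant fraction of every neighbourhood, short connections continue to exist. Connecting all $r$ sets uses only $O(rt)=O(m/\sqrt{\log r}\cdot\sqrt{\log r}/C')$ pool vertices, which is affordable.

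If the greedy argument fails, my fallback is Thomason's refinement: pass to a dense subgraph of $F$ that is a good expander, or more simply has connectivity $\Omega(m)$. I would obtain it via a standard lemma that a graph of density $1/2$ contains a highly connected subgraph of comparable order and density, and then run Steps 2–3 inside it. Once the connected, pairwise adjacent branch sets are in place, adjusting $c$ to absorb all constants gives the $K_r$-model in $F$, hence a $K_r$-minor of $G$ by Lemma~\ref{lem:model}.
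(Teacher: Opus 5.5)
The paper does not prove this lemma; it imports it from Kostochka and Thomason, so you are attempting the full theorem. Your Step 1 is the standard Mader-type reduction and is sound. You should exclude the empty graph from the minimal choice. If you also take $d$ to be an integer, you get $|E(H)|=d|V(H)|$, hence $m\le 2d$ and $\delta(F)\ge d\ge m/2$, so any two non-adjacent vertices of $F$ have a common neighbour.

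\emph{The gap is Step 2.} The estimate that two disjoint random $t$-sets span no edge with probability about $2^{-t^2}$ is a pseudo-randomness property. It does not follow from minimum degree $m/2$, however the sets are exposed. Take $F=K_{m/2,m/2}$, which satisfies everything Step 1 gives you. Here $A_i$ and $A_j$ are non-adjacent exactly when $A_i\cup A_j$ lies in one side, which has probability about $2^{1-2t}=r^{-o(1)}$ for $t=\Theta(\sqrt{\log r})$. Typically about $r2^{-t}\to\infty$ of your sets lie entirely inside each side, and these are pairwise non-adjacent. So your conclusion that, with positive probability, all $\binom r2$ pairs are joined is false, not merely unproved. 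The minimum-degree condition only gives $|N(A_j)|\ge m/2$, i.e.\ a failure probability of order $2^{-t}$ per pair. That is far too weak for a union bound over $\binom r2$ pairs.

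Neither repair addresses this. Passing to a highly connected or expanding subgraph changes nothing, since $K_{m/2,m/2}$ is already $m/2$-connected and a good expander. The obstruction is large pairs of vertex sets with no edges between them, not low connectivity. In Step 3, the appeal to (b) inside $F$ also fails. The more than $d-1$ triangles through an edge of $F$ can have their third vertices anywhere in $H$, not necessarily in $N(v)$, so (b) tells you nothing about $F$ beyond its minimum degree.

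What is missing is exactly the new ingredient of Kostochka's and Thomason's work. One needs a proof that every graph of order $m$ and minimum degree about $m/2$ has a $K_r$-minor with $r=\Omega(m/\sqrt{\log m})$, even when it is far from random. This requires a further argument. For example, one could exploit the non-random structure directly when it is present (in $K_{m/2,m/2}$, contracting a perfect matching already gives $K_{m/2}$), and use random branch sets only when $F$ is pseudo-random enough for your union bound. Without such an argument, Steps 2--3 do not establish the lemma, which is why the paper simply cites the theorem.
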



For any edge $e$ in a graph $G$, let $c_G(e)$ denote the number of cycles in $G$ containing $e$.
\begin{lemma}\label{lem:cycles-edge-Kr}
For $r\ge 3$, the number of cycles of $K_r$ containing a fixed edge is
$
\lfloor e(r-2)!\rfloor-1.
$
\end{lemma}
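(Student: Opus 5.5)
Fix the edge $uv$ of $K_r$. Every cycle of $K_r$ through $uv$ is the edge $uv$ together with a $u$--$v$ path of length at least $2$. Conversely, each such path closes up with $uv$ to a unique cycle containing $uv$. So the plan is to count $u$--$v$ paths in $K_r$ whose interior is nonempty. Such a path is determined by the ordered sequence of its internal vertices, chosen from the $r-2$ vertices other than $u,v$. For each $j$ with $1\le j\le r-2$, the number of ordered selections of $j$ distinct internal vertices is $(r-2)!/(r-2-j)!$. Hence
\[
c_{K_r}(uv)=\sum_{j=1}^{r-2}\frac{(r-2)!}{(r-2-j)!}=\sum_{i=0}^{r-2}\frac{(r-2)!}{i!}-1,
\]
where we substitute $i=r-2-j$ and the term $i=r-2$ (that is, $j=0$) is subtracted.

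Set $m=r-2\ge 1$ and $S_m=\sum_{i=0}^{m}\frac{m!}{i!}$, an integer. It remains to show $S_m=\lfloor e\,m!\rfloor$. Write
\[
e\,m!=\sum_{i\ge 0}\frac{m!}{i!}=S_m+T_m,\qquad T_m=\sum_{i\ge m+1}\frac{m!}{i!}.
\]
Clearly $T_m>0$. For an upper bound, compare with a geometric series:
\[
T_m\le\sum_{t\ge1}\frac{1}{(m+1)^t}=\frac1m .
\]
The inequality is strict because $\frac{m!}{(m+2)!}<\frac1{(m+1)^2}$. So for $m\ge1$ we get $0<T_m<1$, which gives $\lfloor e\,m!\rfloor=S_m$. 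Combining this with the count above, $c_{K_r}(uv)=\lfloor e(r-2)!\rfloor-1$.

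The only delicate point is the bound $T_m<1$, in particular at the boundary case $m=1$ ($r=3$). There $T_1=e-2<1$ directly, and the count is $\lfloor e\rfloor-1=1$, which matches the single triangle through $uv$. The rest is routine bookkeeping: each cycle through $uv$ corresponds to exactly one $u$--$v$ path, because $u$ and $v$ are fixed labelled endpoints, so there is no double counting from orientation.
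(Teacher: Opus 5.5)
Your proof is correct and takes essentially the same route as the paper. Both use the bijection between cycles through $uv$ and $u$--$v$ paths with $j\ge 1$ internal vertices, the count $\sum_{j=1}^{m} m!/(m-j)! = S_m-1$, and the geometric tail bound $0<T_m<1/m\le 1$ to get $S_m=\lfloor e\,m!\rfloor$; your separate check of $m=1$ is harmless but redundant, since the general strict bound already covers that case.
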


\begin{proof}
Fix an edge \(uv\) in \(K_r\). There is a bijection between the cycles containing \(uv\) and the \(u\)--\(v\) paths in \(K_r-uv\): adding \(uv\) to a path gives a cycle, and different paths give different cycles. Hence the number of such cycles equals the number of such paths.

Assume that this path has exactly \(j\) internal vertices. These vertices
are chosen from the \(r-2\) vertices in \(V(K_r)\setminus\{u,v\}\), and
then ordered along the path. Thus, for fixed \(j\), the number of such
paths is
$
        \binom{r-2}{j}j!.
$
Therefore
$$
        c_{K_r}(uv)=\sum_{j=1}^{r-2}\binom{r-2}{j}j!.
$$

Put \(m=r-2\). Then
\[
c_{K_r}(uv)=\sum_{j=1}^{m}\binom{m}{j}j!
   =\sum_{j=1}^{m}\frac{m!}{(m-j)!}
   =\sum_{i=0}^{m-1}\frac{m!}{i!}
   =\sum_{i=0}^{m}\frac{m!}{i!}-1 .
\]
We claim that
$
        \lfloor e\,m!\rfloor=\sum_{i=0}^{m}\frac{m!}{i!}.
$
Indeed, by the Taylor expansion
$
        e=\sum_{i=0}^{\infty}\frac1{i!},
$
we have
\[
        e\,m!
        =
        \sum_{i=0}^{m}\frac{m!}{i!}
        +
        \sum_{i=m+1}^{\infty}\frac{m!}{i!}.
\]
The first sum is an integer. For the tail $\sum_{i=m+1}^{\infty}\frac{m!}{i!}$, write \(i=m+s\). Then
\[
        \sum_{i=m+1}^{\infty}\frac{m!}{i!}
        =
        \sum_{s=1}^{\infty}
        \frac{1}{(m+1)(m+2)\cdots(m+s)}.
\]
This tail is positive, and
\[
        \sum_{s=1}^{\infty}
        \frac{1}{(m+1)(m+2)\cdots(m+s)}
        <
        \sum_{s=1}^{\infty}\frac1{(m+1)^s}
        =
        \frac1m
        \le 1.
\]
Thus
$
        \sum_{i=0}^{m}\frac{m!}{i!}
        <
        e\,m!
        <
        \sum_{i=0}^{m}\frac{m!}{i!}+1.
$
Since \(\sum_{i=0}^{m}m!/i!\) is an integer, it follows that
$
        \lfloor e\,m!\rfloor
        =
        \sum_{i=0}^{m}\frac{m!}{i!}.
$
Consequently,
$
       c_{K_r}(uv)
        =
        \lfloor e\,m!\rfloor-1
        =
        \lfloor e\,(r-2)!\rfloor-1.
$
\end{proof}
\begin{observation}\label{ob:1}
$
        \log(m!)\ge m(\log m-1)
$
for every integer \(m\ge 1\).  
\end{observation}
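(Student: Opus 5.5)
The claim $\log(m!)\ge m(\log m-1)$ is the standard lower Stirling-type estimate, and I will derive it from the Taylor series of the exponential, which the paper has already used in the proof of Lemma~\ref{lem:cycles-edge-Kr}.

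For any real $x>0$,
\[
e^{x}=\sum_{i=0}^{\infty}\frac{x^i}{i!}.
\]
Every term of this series is positive, so the sum is at least any single term. Taking $x=m$ and keeping only the term $i=m$ gives
\[
e^{m}\ge \frac{m^m}{m!},
\qquad\text{that is,}\qquad
m!\ge \frac{m^m}{e^m}.
\]
Since $\log$ is increasing, taking logarithms yields
\[
\log(m!)\ge m\log m-m=m(\log m-1),
\]
which is exactly the inequality of Observation~\ref{ob:1}.

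For $m=1$ both sides can be checked directly: $0\ge -1$. I do not expect any step to be an obstacle; the single point needing care is that discarding the other terms of the series is legitimate, which holds because all of them are positive.

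An alternative route compares the sum with an integral. Since $\log$ is increasing, $\sum_{i=2}^{m}\log i\ge\int_1^m\log t\,dt=m\log m-m+1$. This gives the slightly stronger bound $\log(m!)\ge m(\log m-1)+1$.
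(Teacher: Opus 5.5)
Your proposal is correct and follows the paper's own argument: keep only the $j=m$ term of $e^m=\sum_{j\ge 0} m^j/j!$ to get $m!\ge (m/e)^m$, then take logarithms. Your alternative integral comparison is also valid; it gives the slightly sharper bound $\log(m!)\ge m(\log m-1)+1$, which the paper does not need.
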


\begin{proof}
It is equivalent to
\(m!\ge (m/e)^m\), which follows from
$
        e^m=\sum_{j=0}^{\infty}\frac{m^j}{j!}\ge \frac{m^m}{m!}.
$
\end{proof}

\begin{lemma}\label{lem:R-upper}
For all sufficiently large \(k\), let
$
        R(k)=\min\left\{r\ge 3:
        \left\lfloor e(r-2)!\right\rfloor-1>k\right\}.
$
Then
$
        R(k)=O\left(\frac{\log k}{\log\log k}\right).
$
\end{lemma}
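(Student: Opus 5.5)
The plan is to exhibit an explicit value $r_0 = r_0(k)$ of order $\log k/\log\log k$ for which $\lfloor e(r_0-2)!\rfloor - 1 > k$. Minimality of $R(k)$ then gives $R(k)\le r_0$ directly.

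First reduce to a factorial inequality. Since $e\,m! > m!+1$ for $m\ge 1$, we have $\lfloor e\,m!\rfloor - 1 \ge m!$. It therefore suffices to find $m$ with $m! > k$ and to set $r_0 = m+2$. We will use Observation~\ref{ob:1}, which reduces the task to finding $m$ with $m(\log m - 1) > \log k$.

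Next choose $m = \left\lceil 3\frac{\log k}{\log\log k}\right\rceil$, matching the constant quoted in the introduction. Write $L=\log k$ and $\ell = \log\log k$. Then $m \ge 3L/\ell$, and for large $k$ we also have $m \le 4L/\ell \le L$. Hence
\[
\log m \;\ge\; \log 3 + \ell - \log \ell .
\]
Since $\log \ell = o(\ell)$, for $k$ sufficiently large we get $\log m - 1 \ge \ell/2$. It follows that
\[
m(\log m-1) \;\ge\; \frac{3L}{\ell}\cdot\frac{\ell}{2} \;=\; \tfrac32 L \;>\; L=\log k .
\]
So $m! > k$, and therefore $\lfloor e\,m!\rfloor - 1 > k$. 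Note that $r_0 = m+2 \ge 3$ holds automatically.

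Finally, $R(k) \le m+2 \le 3L/\ell + 3$, and this is $O(\log k/\log\log k)$. The only step requiring care is the lower bound $\log m - 1\ge \ell/2$, where the $-\log\ell$ and $-1$ terms must be absorbed. This works because $\ell - \log \ell - 1 + \log 3 \ge \ell/2$ once $\ell$ is large enough, i.e.\ for $k$ beyond an absolute threshold. This is exactly where the hypothesis "sufficiently large $k$" is used.
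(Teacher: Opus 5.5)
Your proposal is correct and follows essentially the same route as the paper: the same choice $m=\lceil 3\log k/\log\log k\rceil$, $r=m+2$, the same use of Observation~\ref{ob:1}, and the same absorption step $\log m-1\ge\frac12\log\log k$. The only difference is cosmetic: you bound $\lfloor e\,m!\rfloor-1\ge m!$ where the paper uses $\lfloor e\,m!\rfloor-1\ge 2m!-1$.
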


\begin{proof}

We only need to find one admissible value of \(r\) of order
\(\log k/\log\log k\).  Indeed, since \(R(k)\) is the least integer \(r\)
satisfying
$
        \left\lfloor e(r-2)!\right\rfloor-1>k,
$
any such admissible \(r\) gives an upper bound on \(R(k)\).

Set
$
        m=\left\lceil 3\frac{\log k}{\log\log k}\right\rceil
$
and
  $      r=m+2.
$
We shall prove that this \(r\) is admissible for all sufficiently large \(k\).
By Observation \ref{ob:1}, we have
\begin{equation}\label{eq:1}
 \log(m!)\ge m(\log m-1),
\end{equation}
By the choice of \(m\), we have
\begin{equation}\label{eq:2}
  m\ge 3\frac{\log k}{\log\log k}.
\end{equation}
      Thus
$
        \log m
        \ge
        \log\left(3\frac{\log k}{\log\log k}\right)
        =
        \log 3+\log\log k-\log\log\log k.
$
For all sufficiently large \(k\),
$$
        \log 3+\log\log k-\log\log\log k-1
        \ge
        \frac12\log\log k.
$$
Hence
\begin{equation}\label{eq:3}
        \log m-1\ge \frac12\log\log k.
\end{equation}
Therefore by (\ref{eq:1}), (\ref{eq:2}) and (\ref{eq:3}) one has
\[
        \log(m!)
        \ge
        m(\log m-1)
        \ge
        \left(3\frac{\log k}{\log\log k}\right)
        \left(\frac12\log\log k\right)
        =
        \frac32\log k
        >
        \log k.
\]
Thus \(m!>k\).

Since \(e>2\), we have
$
        \left\lfloor em!\right\rfloor-1
        \ge
        2m!-1
        >
        k.
$
As \(r=m+2\), this means
$
        \left\lfloor e(r-2)!\right\rfloor-1>k.
$
Thus \(r\) is admissible, and so
$
        R(k)\le r=m+2.
$
Finally,
\[
        m+2
        =
        \left\lceil 3\frac{\log k}{\log\log k}\right\rceil+2
        =
        O\left(\frac{\log k}{\log\log k}\right).
\]
Therefore
$
        R(k)=O\left(\frac{\log k}{\log\log k}\right).
$
\end{proof}
\section{A bound from the block decomposition}

First, we use block decomposition to give a simple upper bound on the number of edges in a $k$-cactus on $n$ vertices. The bound serves as a benchmark for our main result.  We first recall the following lemma.

\begin{lemma}[Zhang and Huang \cite{MR4700794}]\label{lem:2conn}
Let $G$ be  a $k$-cactus graph on $n$ vertices. If $G$ is 2-connected, then
$
        |E(G)|\le n+k-1.
$
\end{lemma}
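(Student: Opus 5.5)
The plan is an induction along an open ear decomposition: each ear added to the graph creates at least one new cycle through a fixed edge, so that edge lies on at least $|E(G)|-n+1$ cycles. Since $G$ is a $k$-cactus, this forces $|E(G)|-n+1\le k$.

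First dispose of the degenerate case. A $2$-connected graph on two vertices is a single edge, and then $|E(G)|=1\le n+k-1$ trivially. So assume $n\ge 3$. By Whitney's theorem, $G$ then has an open ear decomposition $G_0\subseteq G_1\subseteq\cdots\subseteq G_t=G$ with the following properties:
\begin{itemize}
\item $G_0$ is a cycle $C$;
\item $G_{i}$ is obtained from $G_{i-1}$ by adding a path $P_i$ whose two distinct endpoints $x_i,y_i$ lie in $G_{i-1}$ and whose internal vertices are new;
\item every $G_i$ is $2$-connected.
\end{itemize}
Each ear adds one more edge than vertices, so $|E(G)|-n=t$. The goal therefore reduces to showing $t\le k-1$.

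Fix an edge $f$ of $C$. I will show by induction that $c_{G_i}(f)\ge i+1$. The base case holds because $C$ itself contains $f$. For the inductive step I need the following auxiliary fact: in a $2$-connected graph $H$, for any two distinct vertices $x,y$ and any edge $f$, there is an $x$--$y$ path containing $f$. To prove it, subdivide $f$ by a new vertex $w$; the resulting graph is still $2$-connected. By the fan form of Menger's theorem there are two paths from $w$ to $x$ and to $y$ that share only $w$. Their union is an $x$--$y$ path through $w$, and hence, after undoing the subdivision, an $x$--$y$ path through $f$. The case where $f$ is incident with $x$ or $y$ is handled by the same argument.

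Apply this fact in $G_{i-1}$ to the endpoints $x_i,y_i$ of the ear $P_i$. This gives an $x_i$--$y_i$ path $Q$ in $G_{i-1}$ through $f$, and $Q\cup P_i$ is a cycle of $G_i$ containing $f$. This cycle uses the edges of $P_i$, so it is not contained in $G_{i-1}$ and differs from all cycles counted before. Hence $c_{G_i}(f)\ge c_{G_{i-1}}(f)+1$. At the end, $k\ge c_G(f)\ge t+1$, which gives $|E(G)|=n+t\le n+k-1$.

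The only nontrivial ingredient is the auxiliary path-through-an-edge fact, and I expect the care needed there (including the degenerate cases where $f$ touches $x$ or $y$) to be the main point. The rest is bookkeeping with the ear decomposition.
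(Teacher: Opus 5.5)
Your proof is correct. The paper gives no proof of this lemma: it is quoted from Zhang and Huang \cite{MR4700794} and used only as a black box in the proof of Proposition~\ref{prop:sqrt-bound}. So there is no in-paper argument to compare against, and your open-ear-decomposition induction, $c_{G_i}(f)\ge i+1$ with $|E(G)|-n=t$, is a complete, self-contained substitute.

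Three small remarks:
\begin{itemize}
\item The step ``$Q\cup P_i$ is a cycle'' quietly uses simplicity when $P_i$ is a single chord $x_iy_i$. In that case $x_iy_i\notin E(G_{i-1})$, so $Q$ has length at least $2$ and the union is a genuine cycle. Likewise, if $f=x_iy_i$, then $P_i$ has length at least $2$ and $Q=f$ works.
\item Your subdivision-plus-fan argument for the auxiliary fact is fine. Alternatively, you can use the classical fact that any two edges of a $2$-connected graph lie on a common cycle: applied in $G_i$ to $f$ and any edge of $P_i$, it immediately gives a cycle through $f$ that is not contained in $G_{i-1}$.
\item Any edge of a $2$-connected graph lies on a cycle, and that cycle can serve as $G_0$. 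Your argument therefore shows that every edge of a $2$-connected graph lies on at least $|E(G)|-n+1$ cycles, which is stronger than the lemma requires.
\end{itemize}
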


\begin{proposition}\label{prop:sqrt-bound}
Let $G$ be a $n$-vertex $k$-cactus. Then
$
        |E(G)|
        \le
        \frac{3+\sqrt{1+8k}}{4}(n-1).
$
\end{proposition}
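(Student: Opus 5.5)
The plan is to bound each block separately and then sum over the block decomposition. Write $\alpha=\frac{3+\sqrt{1+8k}}{4}$. Since the blocks of $G$ partition $E(G)$, and $\sum_B(|V(B)|-1)=n-c\le n-1$ where $c\ge 1$ is the number of components of $G$, it suffices to prove that every block $B$ satisfies
\[
|E(B)|\le \alpha\,(|V(B)|-1).
\]
(The identity $\sum_B(|V(B)|-1)=n-c$ is the standard block--cut-vertex tree count, applied in each component. Isolated vertices contribute no blocks and only increase $c$.)

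Fix a block $B$ and put $m=|V(B)|-1$. If $B$ is a single edge, then $|E(B)|=1=m\le\alpha m$, because $\alpha\ge 1$ for every $k\ge 0$. Otherwise $B$ is $2$-connected with $m\ge 2$. Every cycle of $B$ is a cycle of $G$, so $B$ is itself a $k$-cactus, and Lemma~\ref{lem:2conn} gives $|E(B)|\le m+k$. Since $B$ is simple, we also have the trivial bound $|E(B)|\le \binom{m+1}{2}=\frac{m(m+1)}{2}$. So it remains to show
\[
\min\Bigl\{m+k,\ \tfrac{m(m+1)}{2}\Bigr\}\le \alpha m .
\]

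Set $s=2\alpha-1=\frac{1+\sqrt{1+8k}}{2}$; this is the positive root of $s^2-s=2k$. If $m\le s$, the second bound gives $\frac{m(m+1)}{2}\le \frac{(s+1)m}{2}=\alpha m$. If $m>s$, the first bound gives
\[
\frac{m+k}{m}=1+\frac{k}{m}<1+\frac{k}{s}=1+\frac{s-1}{2}=\alpha,
\]
where $k/s=(s-1)/2$ holds exactly because $s^2-s=2k$. In both cases $|E(B)|\le \alpha m$, and summing over all blocks yields $|E(G)|\le \alpha(n-1)$.

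The only delicate point is choosing the threshold between the two regimes so that both bounds meet at $\alpha$. The value $\alpha$ is exactly the one that balances $\frac{m+1}{2}$ against $1+\frac{k}{m}$ at $m=s$, and the algebra closes with equality. No genuine obstacle is expected beyond verifying that identity and the block-sum count.
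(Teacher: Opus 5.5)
Your proof is correct and is essentially the paper's argument: you bound each block by $\min\{|V(B)|+k-1,\binom{|V(B)|}{2}\}$ using Lemma~\ref{lem:2conn}, show this minimum is at most $\frac{3+\sqrt{1+8k}}{4}(|V(B)|-1)$ by splitting at the crossover point $v_0=s+1$, and sum over blocks. The differences are cosmetic. You handle disconnected $G$ via $\sum_B(|V(B)|-1)=n-c\le n-1$ instead of adding bridges, and you check the per-block inequality by direct algebra rather than the secant-line/convexity argument. You also treat single-edge blocks separately, which is slightly cleaner than the paper's appeal to $2$-connectivity for every block.
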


\begin{proof} If $G$ is disconnected, we may add bridges between its components (edges joining different components create no new cycles). The resulting graph is still a $k$-cactus, has the same vertex set, and has at least as many edges. Thus it suffices to prove the result for connected
graphs. It is well-known that every connected graph has a block decomposition; that is, its edge set can be partitioned into the edge sets of its blocks. 

Let $\mathcal B$ be the set of blocks of $G$. Clearly, $B$ is a $k$-cactus. For each $B\in\mathcal B$, put $v=|V(B)|$. 

\begin{claim}\label{claim:1}

$
        |E(B)|\le \min\left\{v+k-1,\binom v2\right\}.
$
\end{claim}

\begin{proof}
Since $B$ is $2$-connected, Lemma~\ref{lem:2conn} yields $|E(B)|\le v+k-1$. The bound $\binom{v}{2}$ is the trivial upper bound for any $v$-vertex graph.
\end{proof}

Let
$
    v_0 = \frac{3+\sqrt{1+8k}}{2},
$
which is the larger root of the equation $\binom{x}{2} = x + k - 1$ in $x$.
\begin{claim}\label{claim:2}
$
        |E(B)|\le \frac{v_0}{2}(v-1)
$
for every $B\in\mathcal B$. 
\end{claim}

\begin{proof}
 Put
$
f(x)=\binom{x}{2}$, $ g(x)=x+k-1$ and $
\ell(x)=\frac{v_0}{2}(x-1).
$
By the definition of \(v_0\), we have \(f(v_0)=g(v_0)\). Moreover,
$
\ell(1)=f(1)=0
$
and
$
\ell(v_0)=\frac{v_0}{2}(v_0-1)=\binom{v_0}{2}=f(v_0)=g(v_0).
$
Thus \(\ell\) is the secant line of the convex function \(f\) between
\(x=1\) and \(x=v_0\). Hence
$
f(x)\le \ell(x)$ for $1\le x\le v_0.
$
On the other hand, the two linear functions \(g\) and \(\ell\) agree at
\(x=v_0\), and their slopes are \(1\) and \(v_0/2\), respectively. Since
\(v_0\ge 2\), we have \(v_0/2\ge 1\). Therefore
$
g(x)\le \ell(x)$ for $x\ge v_0.
$
Consequently, for every \(v\ge 1\),
$
\min\left\{g(v),f(v)\right\}\le \ell(v)
=\frac{v_0}{2}(v-1).
$
By Claim \ref{claim:1}, this gives
$
|E(B)|\le \frac{v_0}{2}(v-1).
$
\end{proof}

%
%
%
%
%
%
%
Finally, the block decomposition gives
$
        |E(G)|=\sum_{B\in\mathcal B}|E(B)|,
$ and $
        n-1=\sum_{B\in\mathcal B}\bigl(|V(B)|-1\bigr).
$
Hence, we have
\[
        |E(G)|
        \le  
        \sum_{B\in\mathcal B}   \frac{v_0}{2}\bigl(|V(B)|-1\bigr)
        =
        \frac{v_0}{2}(n-1).
\]
Since
$
        \frac{v_0}{2}
        =
        \frac{3+\sqrt{1+8k}}{4},
$
we obtain
$
        |E(G)|
        \le
        \frac{3+\sqrt{1+8k}}{4}(n-1).
$
\end{proof}

While Proposition~\ref{prop:sqrt-bound} gives a bound of roughly $\sqrt{k}\, n$, we can do much better for large $k$: the minor-based method in next Section~\ref{sec:3} replaces this with a coefficient of order $\frac{\log k}{\sqrt{\log\log k}}$.

\section{Proofs of Theorem \ref{thm:minor-bound} and Corollary \ref{cor:minor-explicit}}\label{sec:3}

Before proving our main theorems, we need a preparatory subsection. Proposition~\ref{lem:lift} will be a crucial tool; and we also show that it implies that cactus graphs are minor-closed (see Theorem \ref{cor:closed}) which is of independent interest.

\subsection{ Minor-closedness of $k$-cacti}

A class of graphs is \emph{minor-closed} if every minor of a graph in the
class again belongs to the class.
It is known that forests and cacti are minor-closed: a forest is
precisely a graph with no $K_3$-minor, and a cactus  is
precisely a graph with no diamond-minor~\cite{ElMallahColbourn1988}, where
the diamond is the graph obtained from $K_4$ by deleting an edge. First, we shall show
that the class of $k$-cacti is minor-closed for every
fixed $k$. The following proposition will be a key tool.
\begin{proposition}\label{lem:lift}
Let $H$ be a minor of a graph $G$. Then for every edge $h\in E(H)$ there exists an edge
$\widehat h\in E(G)$ such that
$
        c_H(h)\le c_G(\widehat h).
$
\end{proposition}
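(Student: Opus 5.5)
We will use Lemma~\ref{lem:model}: since $H$ is a minor of $G$, there is an $H$-model in $G$, consisting of a subgraph $M\subseteq G$, branch sets $X_v$ ($v\in V(H)$), and a fixed representative edge $\widehat{uv}$ for each $uv\in E(H)$. For $h=uv$ we take $\widehat h=\widehat{uv}$ and construct an injection from the set of cycles of $H$ through $h$ into the set of cycles of $G$ through $\widehat h$. It is enough to work with cycles of $M$, since cycles of $M$ are cycles of $G$.

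The map is defined as follows. As in the proof of Lemma~\ref{lem:cycles-edge-Kr}, a cycle $C$ of $H$ through $h=uv$ corresponds to a $u$--$v$ path $P=u=w_0w_1\cdots w_t=v$ in $H-h$, with $t\ge 2$. For each $i$ let $a_i b_{i+1}$ be the representative edge $\widehat{w_iw_{i+1}}$, where $a_i\in X_{w_i}$ and $b_{i+1}\in X_{w_{i+1}}$. Let $\widehat h=xy$ with $x\in X_u$ and $y\in X_v$. Put $b_0=x$ and $a_t=y$. For each $i$, choose a path $Q_i$ inside the connected subgraph $M[X_{w_i}]$ from $b_i$ to $a_i$, where $Q_i$ is a single vertex if $b_i=a_i$.

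To make the choice canonical, and hence the map well defined, we fix once and for all a spanning tree $T_v$ of each $M[X_v]$ and let $Q_i$ be the unique $b_i$--$a_i$ path in $T_{w_i}$. Concatenating $Q_0$, $\widehat{w_0w_1}$, $Q_1$, $\dots$, $Q_t$ gives a walk from $x$ to $y$. This walk is a path, because the $Q_i$ lie in pairwise disjoint branch sets (the $w_i$ are distinct) and each $Q_i$ is a path. The path avoids the edge $xy$, because that edge joins $X_u$ and $X_v$, while every edge of the walk lies either inside one branch set or between consecutive branch sets $X_{w_i},X_{w_{i+1}}$. Since $t\ge2$, the path never uses an edge between $X_u=X_{w_0}$ and $X_v=X_{w_t}$. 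Adding $xy$ therefore yields a cycle $\widehat C$ of $G$ through $\widehat h$; a path of length at least one together with $xy$ gives a cycle, and the length is at least $2$ because the path passes through $X_{w_1}$.

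Injectivity is then checked. From $\widehat C$ we recover $P$: the sequence of branch sets met along $\widehat C-\widehat h$, in order from $x$ to $y$, is exactly $X_{w_0},X_{w_1},\dots,X_{w_t}$, each visited in one contiguous segment. Hence distinct paths $P$ give distinct vertex sequences of branch sets, and so distinct cycles. This gives $c_H(h)\le c_M(\widehat h)\le c_G(\widehat h)$.

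I expect the main obstacle to be the careful verification that the concatenation is a genuine path and that the resulting cycle determines $P$, especially in the degenerate cases. These include $b_i=a_i$, the situation where $x$ coincides with $a_0$, and the requirement that $t\ge2$ so that $\widehat h$ is not reused. Disjointness of the branch sets handles all of these cases.
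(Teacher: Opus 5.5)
Your proposal is correct and takes the same route as the paper. Both arguments take an $H$-model, lift each cycle through $h$ by replacing its edges with representative edges and joining consecutive ones by paths inside the disjoint, connected branch sets, and then show that distinct cycles of $H$ give distinct cycles of $G$ through $\widehat h$. Your spanning-tree choice of internal paths (which makes the map well defined) and your recovery of $P$ from the order of branch sets along $\widehat C-\widehat h$ are slightly more careful than the paper, which argues injectivity by noting that two distinct cycles differ in some edge $e$ and hence their lifts differ in $\widehat e$.
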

\begin{proof}
By Lemma \ref{lem:model}, we can
fix an $H$-model in $G$ with branch sets $\{X_x\}_{x\in V(H)}$. For each
edge $xy\in E(H)$, fix one representative  edge of $xy$ of $G$ joining $X_x$ and $X_y$,  denoted $\widehat{xy}$. Since the branch
sets are disjoint, distinct edges of $H$ have distinct representative
edges.

Fix an edge $h\in E(H)$. If $h$ lies on no cycle of $H$, then $c_H(h)=0$ and the inequality
holds trivially for $\widehat h$. Assume therefore that $h$ belongs to at least one cycle
and pick a cycle $C=x_1x_2\cdots x_tx_1$ of $H$ through $h$ with $h=x_1x_2$ (say). Replace each edge $x_ix_{i+1}$ by its representative edge
$\widehat{x_ix_{i+1}}$, and inside each branch set \(X_{x_i}\), join the two endvertices incident with
the representative edges by a path, possibly of length zero if the endvertices
coincide; this is possible since \(X_{x_i}\) is connected.
These representative edges and paths form a cycle of $G$ through
$\widehat h$ (see Figure~\ref{fig:lift}): the paths lie in pairwise
disjoint branch sets and the representative edges run between consecutive
branch sets, so no vertex repeats. Distinct cycles of $H$ through $h$
give distinct cycles of $G$, since they differ in some edge $e$, hence in
its representative edge $\widehat e$.
Therefore the cycles of $H$ through $h$ inject into the cycles of $G$
through $\widehat h$, and $c_H(h)\le c_G(\widehat h)$.
\end{proof}

\begin{figure}[H]
\centering
\begin{tikzpicture}[
  every node/.style={font=\small},
  bs/.style={fill=lightgray!80, dashed, ellipse, inner sep=0pt},
  dot/.style={circle, fill, inner sep=1.6pt},
  cyc/.style={blue, line width=1pt, line cap=round},
  hed/.style={blue, line width=1.6pt, line cap=round},
  pth/.style={blue, line width=1pt, decorate,
    decoration={snake, amplitude=0.5mm, segment length=2.2mm,
      pre length=1mm, post length=1mm}}]
\node[dot] (a) at (0,1.5) {};
\node[dot] (b) at (0.9,-0.6) {};
\node[dot] (c) at (-0.9,-0.6) {};
\node[above] at (a.north) {$a$};
\node[right] at (b.east) {$b$};
\node[left]  at (c.west) {$c$};
\draw[hed] (a) -- (b) node[midway, right=1pt] {$h$};
\draw[cyc] (b) -- (c);
\draw[cyc] (c) -- (a);
\node at (0,2.3) {a cycle in $H$};
\draw[->, >=stealth, thick, gray] (1.6,0.3) -- (4.1,0.3);
\begin{scope}[xshift=1.2cm]
\node[bs, minimum width=2.4cm, minimum height=1.5cm] (Ba) at (5.4,1.5) {};
\node[bs, minimum width=2.4cm, minimum height=1.5cm] (Bb) at (7.6,-1.4) {};
\node[bs, minimum width=2.4cm, minimum height=1.5cm] (Bc) at (3.2,-1.4) {};
\node[above=2pt] at (Ba.north) {branch set of $a$};
\node[below=2pt] at (Bb.south) {branch set of $b$};
\node[below=2pt] at (Bc.south) {branch set of $c$};
\node[dot] (a1) at (4.8,1.2) {};
\node[dot] (a2) at (6.0,1.2) {};
\node[dot] (b1) at (7.2,-0.8) {};
\node[dot] (b2) at (6.7,-1.7) {};
\node[dot] (c1) at (4.1,-1.7) {};
\node[dot] (c2) at (3.6,-0.8) {};
\draw[hed] (a2) -- (b1) node[midway, right=1pt] {$\widehat h$};
\draw[cyc] (b2) -- (c1);
\draw[cyc] (c2) -- (a1);
\draw[pth] (a1) to[bend left=30] (a2);
\draw[pth] (b1) to[bend left=30] (b2);
\draw[pth] (c1) to[bend left=30] (c2);
\end{scope}
\end{tikzpicture}
\caption{Lifting a cycle of $H$ to a cycle of $G$ through a minor model.}
\label{fig:lift}
\end{figure}

\begin{theorem}\label{cor:closed}
 If a graph $G$ is a $k$-cactus and $H$ is a
minor of $G$, then $H$ is a $k$-cactus.
\end{theorem}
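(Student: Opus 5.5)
The statement follows directly from Proposition~\ref{lem:lift}. Let $G$ be a $k$-cactus and let $H$ be a minor of $G$. We must show that $c_H(h)\le k$ for every edge $h\in E(H)$.

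Fix $h\in E(H)$. Proposition~\ref{lem:lift} provides an edge $\widehat h\in E(G)$ with $c_H(h)\le c_G(\widehat h)$. Since $G$ is a $k$-cactus, $c_G(\widehat h)\le k$. Combining the two inequalities gives $c_H(h)\le k$. As $h$ was arbitrary, every edge of $H$ lies on at most $k$ cycles, so $H$ is a $k$-cactus.

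There is one degenerate case to check: $H$ might have no edges at all, for instance if we deleted everything. Then the condition holds vacuously. Nothing else needs care, since all the combinatorial content (lifting cycles of $H$ injectively to cycles of $G$ through representative edges of a minor model) is already contained in Proposition~\ref{lem:lift}. I expect no real obstacle here; the only point to verify is that Proposition~\ref{lem:lift} is stated for arbitrary minors, which it is via Lemma~\ref{lem:model}.

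As a remark, this argument combined with Lemma~\ref{lem:cycles-edge-Kr} will then give the first assertion of Theorem~\ref{thm:minor-bound}. Suppose $G$ had a $K_{R(k)}$ minor. Then $K_{R(k)}$ would be a $k$-cactus. But by Lemma~\ref{lem:cycles-edge-Kr} and the definition of $R(k)$, each edge of $K_{R(k)}$ lies on $\lfloor e(R(k)-2)!\rfloor-1>k$ cycles, which is a contradiction.
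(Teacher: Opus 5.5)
Your proof is correct and is essentially identical to the paper's. Both fix an arbitrary edge $h\in E(H)$, invoke Proposition~\ref{lem:lift} to get $\widehat h$ with $c_H(h)\le c_G(\widehat h)\le k$, and conclude; your closing remark deriving the excluded $K_{R(k)}$ minor via minor-closedness is also valid, whereas the paper applies Proposition~\ref{lem:lift} directly with $H=K_r$ (Lemma~\ref{lem:minor-cycle}), which amounts to the same thing.
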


\begin{proof}
Let $h\in E(H)$ be arbitrary. By Proposition~\ref{lem:lift} there is $\widehat h\in
E(G)$ with $c_H(h)\le c_G(\widehat h)$, and $c_G(\widehat h)\le k$ because $G$ is
a $k$-cactus. Hence $c_H(h)\le k$ for every edge $h$, i.e.\ $H$ is a $k$-cactus.
\end{proof}

\subsection{Proofs of Theorem \ref{thm:minor-bound} and Corollary \ref{cor:minor-explicit}}

\begin{lemma}\label{lem:minor-cycle}
If a graph \(G\) contains \(K_r\) as a minor, then some edge of \(G\) is contained in at least
$
        \left\lfloor e(r-2)!\right\rfloor-1
$
cycles.
\end{lemma}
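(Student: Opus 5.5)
The plan is to combine Proposition~\ref{lem:lift} with Lemma~\ref{lem:cycles-edge-Kr}; no new construction is needed. Suppose $G$ contains $K_r$ as a minor. First I will dispose of small $r$. For $r\le 2$ the expression $\lfloor e(r-2)!\rfloor-1$ is either undefined or at most $1$, and so the statement is only meaningful for $r\ge 3$. I would either restrict to $r\ge 3$, matching the hypothesis of Lemma~\ref{lem:cycles-edge-Kr}, or check the degenerate values separately. Since $K_r$ then has at least one edge, $G$ has at least one edge as well.

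Next, fix any edge $h\in E(K_r)$. By Lemma~\ref{lem:cycles-edge-Kr},
\[
c_{K_r}(h)=\lfloor e(r-2)!\rfloor-1 .
\]
Applying Proposition~\ref{lem:lift} with $H=K_r$ gives an edge $\widehat h\in E(G)$, namely the representative edge of $h$ in a fixed $K_r$-model, with
\[
c_G(\widehat h)\ge c_{K_r}(h)=\lfloor e(r-2)!\rfloor-1 .
\]
This is exactly the edge the statement asks for.

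I do not expect a real obstacle, because all the substance already sits in Proposition~\ref{lem:lift}. In particular, the injectivity of lifting cycles through a model has been handled there. The only point needing care is the range of $r$: the formula in Lemma~\ref{lem:cycles-edge-Kr} is proved only for $r\ge 3$, so the write-up should state this assumption explicitly, or note that it is implicit. That suffices for the later application with $r=R(k)\ge 3$. The consequence for Theorem~\ref{thm:minor-bound} is immediate: if $\lfloor e(r-2)!\rfloor-1>k$, then a $k$-cactus cannot contain a $K_r$ minor.
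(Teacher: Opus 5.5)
Your proposal is correct and matches the paper's proof: fix an edge $h$ of $K_r$, use Lemma~\ref{lem:cycles-edge-Kr} to get $c_{K_r}(h)=\lfloor e(r-2)!\rfloor-1$, and apply Proposition~\ref{lem:lift} with $H=K_r$ to obtain the edge $\widehat h$ of $G$. Your caution about the range of $r$ is justified, and the restriction $r\ge 3$ is genuinely required rather than a case you could check separately: for $r=2$ the bound equals $1$, and $G=K_2$ is a counterexample.
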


\begin{proof}
By Lemma~\ref{lem:cycles-edge-Kr}, every edge of \(K_r\) lies in exactly
\(\lfloor e(r-2)!\rfloor-1\) cycles.  Fix one such edge \(h\) and apply the
 Proposition~\ref{lem:lift} with \(H=K_r\): there is an edge
\(\widehat h\in E(G)\) with
$
        c_G(\widehat h)\ge c_{K_r}(h)=\left\lfloor e(r-2)!\right\rfloor-1. 
$
\end{proof}

\begin{lemma}\label{lem:no-minor}
Let \(G\) be a \(k\)-cactus.  If
$
        \left\lfloor e(r-2)!\right\rfloor-1>k,
$
then \(G\) contains no \(K_r\)-minor.
\end{lemma}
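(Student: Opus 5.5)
The plan is a direct contrapositive argument using Lemma~\ref{lem:minor-cycle}. Suppose, for contradiction, that the $k$-cactus $G$ contains a $K_r$-minor, where $r$ satisfies $\lfloor e(r-2)!\rfloor-1>k$. Note first that $r\ge 3$ may be assumed. For $r\le 2$ the quantity $\lfloor e(r-2)!\rfloor-1$ is not in the range covered by Lemma~\ref{lem:cycles-edge-Kr}. It is also harmless there: $K_1$ and $K_2$ have no cycles, so the hypothesis is really intended for $r\ge 3$, consistent with the definition of $R(k)$ in Theorem~\ref{thm:minor-bound}. I will state this restriction explicitly.

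With $r\ge 3$, Lemma~\ref{lem:minor-cycle} gives an edge $\widehat h\in E(G)$ with
\[
c_G(\widehat h)\ge \lfloor e(r-2)!\rfloor-1>k .
\]
On the other hand, since $G$ is a $k$-cactus, every edge of $G$ lies on at most $k$ cycles, so $c_G(\widehat h)\le k$. These two facts contradict each other, so $G$ has no $K_r$-minor.

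Equivalently, one could argue through Theorem~\ref{cor:closed}. A $K_r$-minor of $G$ would itself be a $k$-cactus. But by Lemma~\ref{lem:cycles-edge-Kr}, each edge of $K_r$ lies on $\lfloor e(r-2)!\rfloor-1>k$ cycles, so $K_r$ is not a $k$-cactus. Either route works. There is no real obstacle; the only point needing care is the range $r\ge 3$ in which Lemma~\ref{lem:cycles-edge-Kr} applies.
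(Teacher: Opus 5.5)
Your argument is correct and is exactly the paper's proof: Lemma~\ref{lem:minor-cycle} gives an edge of $G$ lying on at least $\lfloor e(r-2)!\rfloor-1>k$ cycles, which contradicts the $k$-cactus condition. Your restriction to $r\ge 3$ is actually necessary, not merely harmless: for $r=2$ and $k=0$ the hypothesis holds, yet any forest with an edge has a $K_2$-minor.
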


\begin{proof}
If \(G\) contained a \(K_r\)-minor, then by Lemma~\ref{lem:minor-cycle}, some edge of \(G\) would be contained in at least
$
        \left\lfloor e(r-2)!\right\rfloor-1
$
cycles.  Since this number is larger than \(k\), this contradicts the definition of a \(k\)-cactus.  Hence \(G\) contains no \(K_r\)-minor.
\end{proof}

\begin{proof}[\textbf{Proof of Theorem~\ref{thm:minor-bound}}]
By the definition of \(R(k)\),
$
        \left\lfloor e(R(k)-2)!\right\rfloor-1>k,
$
so by Lemma~\ref{lem:no-minor}, \(G\) contains no \(K_{R(k)}\)-minor. If
$
        |E(G)|>c\,R(k)\sqrt{\log R(k)}\,n,
$
then Lemma~\ref{lem:KT} would give a \(K_{R(k)}\)-minor in \(G\), a
contradiction. Hence
$
        |E(G)|\le c\,R(k)\sqrt{\log R(k)}\,n. 
$
\end{proof}


\begin{proof}[\textbf{Proof of Corollary~\ref{cor:minor-explicit}}]
Let $R=R(k)$. By Theorem~\ref{thm:minor-bound},
$
        |E(G)|\le c\,R\sqrt{\log R}\,n.
$
For all sufficiently large \(k\), Lemma~\ref{lem:R-upper} gives
$
        R=O\!\left(\frac{\log k}{\log\log k}\right),
$
and in particular
$
        \log R=O(\log\log k).
$
Hence
\[
\begin{aligned}
        R\sqrt{\log R}
        &=
        O\!\left(
        \frac{\log k}{\log\log k}\sqrt{\log\log k}
        \right) \\
        &=
        O\!\left(
        \frac{\log k}{\sqrt{\log\log k}}
        \right).
\end{aligned}
\]
Since $c$ is an absolute constant, combining this with
Theorem~\ref{thm:minor-bound} gives
$
        |E(G)|
        =
        O\!\left(
        \frac{\log k}{\sqrt{\log\log k}}\,n
        \right)
$
for all sufficiently large \(k\).
\end{proof}

\section{Concluding remarks}

For an $n$-vertex $k$-cactus $G$, we have shown that
$
        |E(G)|
        =
        O\!\left(\frac{\log k}{\sqrt{\log\log k}}\,n\right).
$ We note, however, that this bound is unlikely to be optimal, and further improvement seems possible.
There are infinitely many $n$ for which Proposition~\ref{cor:lower-log} gives $k$-cacti with $|E(G)| = \Omega\!\left(\frac{\log k}{\log\log k}\,n\right)$.
To prove Proposition~\ref{cor:lower-log}, we need the following lemma.
\begin{lemma}\label{prop:lower}
Let \(r\ge3\) be an integer satisfying
$
        \left\lfloor e(r-2)!\right\rfloor-1\le k.
$
Then, for infinitely many values of \(n\), there exists an \(n\)-vertex \(k\)-cactus \(G\) with
$
        |E(G)|=\frac r2(n-1).
$
\end{lemma}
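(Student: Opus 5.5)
The construction is a chain of copies of $K_r$ glued at cut-vertices, as the introduction suggests (``coalescing many copies of a complete graph''). Fix $r\ge 3$ with $\lfloor e(r-2)!\rfloor-1\le k$. For each integer $t\ge 1$ I will build a connected graph $G_t$ out of $t$ copies $B_1,\dots,B_t$ of $K_r$. Start from $B_1$. For $i\ge 2$, identify one vertex of $B_i$ with some vertex of $G_{i-1}$, and make the other $r-1$ vertices of $B_i$ new. The simplest choice is a path-like chain, in which $B_i$ shares exactly one vertex with $B_{i-1}$ and no vertex with earlier copies.

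The first step is to count vertices and edges. We have $|V(G_t)|=r+(t-1)(r-1)=t(r-1)+1$, so $n_t-1=t(r-1)$. Also $|E(G_t)|=t\binom r2=t\cdot\frac{r(r-1)}2=\frac r2(n_t-1)$. Since $n_t=t(r-1)+1$ takes infinitely many values as $t$ ranges over the positive integers, this gives the required infinitely many $n$.

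The second step is to check that $G_t$ is a $k$-cactus, and this is the only point needing care. By construction every shared vertex is a cut-vertex of $G_t$. Each $B_i$ is $2$-connected, since $r\ge 3$. So the blocks of $G_t$ are exactly $B_1,\dots,B_t$, which can be seen by induction on $t$: gluing a $2$-connected graph at a single vertex adds exactly one new block.

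I will then use the standard fact that every cycle of a graph lies inside a single block. A cycle is $2$-connected, so it is contained in a maximal $2$-connected subgraph. Hence for an edge $f\in E(B_i)$ we get $c_{G_t}(f)=c_{B_i}(f)$. By Lemma~\ref{lem:cycles-edge-Kr}, this equals $\lfloor e(r-2)!\rfloor-1\le k$. Therefore every edge lies on at most $k$ cycles, and $G_t$ is an $n_t$-vertex $k$-cactus with exactly $\frac r2(n_t-1)$ edges.

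The main obstacle is only the justification that cycles do not pass between different copies. I would argue this directly. Suppose a cycle $C$ contains edges from two different blocks. Then $C$ passes through a cut-vertex $w$ separating them. Removing $w$ from $C$ leaves a path that still connects the two sides, which contradicts $w$ being a cut-vertex. Everything else is routine counting.
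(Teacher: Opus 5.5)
Your proof is correct and is essentially the paper's argument. The paper identifies one vertex from each of the $s$ copies of $K_r$ into a single common vertex rather than forming a chain, but the count $n=1+s(r-1)$, $|E(G)|=s\binom r2=\frac r2(n-1)$ and the reason the graph is a $k$-cactus are the same: every cycle lies inside one $K_r$ block, whose edges lie on $\lfloor e(r-2)!\rfloor-1\le k$ cycles by Lemma~\ref{lem:cycles-edge-Kr}.
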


\begin{proof}
By Lemma~\ref{lem:cycles-edge-Kr}, every edge of \(K_r\) is contained in exactly
$
        \left\lfloor e(r-2)!\right\rfloor-1
$
cycles.  Hence, under the assumed inequality, \(K_r\) is a \(k\)-cactus.
Take \(s\) copies of \(K_r\), and identify one vertex from each copy into a single common vertex.  The resulting graph is still a \(k\)-cactus, since every cycle lies entirely inside one of the complete-graph blocks.
The graph has
$
        n=1+s(r-1)
$
vertices and
$
        m=s\binom r2
$
edges.  Therefore
$
        m
        =
        s\frac{r(r-1)}2
        =
        \frac r2\,s(r-1)
        =
        \frac r2(n-1).
$
As \(s\) ranges over the positive integers, this gives infinitely many values of \(n\).
\end{proof}

\begin{proposition}\label{cor:lower-log}
For every sufficiently large $k$, there are infinitely many values of $n$
for which there exists an $n$-vertex $k$-cactus $G$ satisfying
$
        |E(G)|
        =
        \Omega\left(
        \frac{\log k}{\log\log k}\, n
        \right).
$
\end{proposition}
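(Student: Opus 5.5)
The plan is to apply Lemma~\ref{prop:lower} with the largest admissible $r$, and then show that this $r$ is of order $\log k/\log\log k$. For large $k$, set
\[
r = r(k) = \max\{r\ge 3:\ \lfloor e(r-2)!\rfloor-1\le k\}.
\]
This set is nonempty, since $r=3$ gives $\lfloor e\rfloor-1=1\le k$. It is also finite, because the quantity increases to infinity.

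Lemma~\ref{prop:lower} then gives infinitely many $n$ (namely $n=1+s(r-1)$) together with $n$-vertex $k$-cacti satisfying $|E(G)|=\frac r2(n-1)\ge \frac r4 n$ for $n\ge 2$. So it suffices to prove $r(k)=\Omega(\log k/\log\log k)$.

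\textbf{Step 1: a sufficient condition on $m=r-2$.} Since $e\,m!<3\,m!$, the inequality $\lfloor e\,m!\rfloor-1\le k$ holds whenever $3\,m!\le k$. Also $m!\le m^m$, so it is enough to have $\log 3+m\log m\le \log k$.

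\textbf{Step 2: choose $m$.} Take
\[
m=\left\lfloor \frac{\log k}{2\log\log k}\right\rfloor .
\]
For large $k$ we have $\log m\le \log\log k$, and therefore
\[
m\log m\le \tfrac12\log k\le \log k-\log 3 .
\]
Hence $r=m+2$ is admissible, which gives $r(k)\ge m+2\ge \frac{\log k}{4\log\log k}$ for large $k$ (the floor costs at most a factor of two once $k$ is large).

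\textbf{Step 3: conclude.} Combining the steps,
\[
|E(G)|\ge \frac{r}{4}n\ge \frac{1}{16}\,\frac{\log k}{\log\log k}\,n,
\]
which is the required $\Omega$ bound.

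The main point needing care is Step~2, which is the inverse of the estimate in Lemma~\ref{lem:R-upper}. There one uses the lower bound $m!\ge (m/e)^m$; here we need an upper bound on $m!$, and the crude estimate $m!\le m^m$ is enough. The remaining asymptotic inequalities are routine for sufficiently large $k$. One should also make sure that the same $r$ is used for all $s$, so that infinitely many $n$ arise for each fixed $k$.
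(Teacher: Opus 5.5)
Your proposal is correct and follows essentially the paper's own argument. The paper also takes $m=\lfloor \frac12\frac{\log k}{\log\log k}\rfloor$ and $r=m+2$, uses $m!\le m^m$ with $\log m\le\log\log k$ to get $m!\le\sqrt{k}$ (hence admissibility), and then applies Lemma~\ref{prop:lower} with $n-1\ge n/2$ to reach the same constant $\frac1{16}$; your use of the maximal admissible $r(k)$ and the sufficient condition $3\,m!\le k$ are only cosmetic variations.
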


\begin{proof}

Choose
$
        m=\left\lfloor
        \frac12\frac{\log k}{\log\log k}
        \right\rfloor,
$ and $
        r=m+2.
$
For all sufficiently large $k$, we have $m\ge 1$, and hence $r\ge 3$, and now we  prove that   $ \left\lfloor e(r-2)!\right\rfloor-1\le k$.  Since $r-2=m$, it is
enough to show that
$
        \lfloor em!\rfloor-1\le k.
$
By the definition of $m$,
$
        m\le \frac12\frac{\log k}{\log\log k}.
$
Also, for all sufficiently large $k$, we have
$
        m\le \log k.
$
Therefore
$
        \log m\le \log\log k.
$
Since $m!\le m^m$, it follows that
$
        \log(m!)\le m\log m.
$
Combining the above inequalities gives
\[
        \log(m!)
        \le
        m\log m
        \le
        \left(\frac12\frac{\log k}{\log\log k}\right)\log\log k
        =
        \frac12\log k.
\]
Hence
$
        m!\le e^{(\log k)/2}=\sqrt{k}.
$
Thus
$
        em!\le e\sqrt{k}.
$
For all sufficiently large $k$, we have $e\sqrt{k}\le k$.  Consequently,
$
        em!\le k,
$
and hence
$
        \lfloor em!\rfloor-1\le k.
$
Therefore
$
        \left\lfloor e(r-2)!\right\rfloor-1\le k,
$ as desired.

 Thus, by Lemma~\ref{prop:lower}, for infinitely many values
of $n$, there exists an $n$-vertex $k$-cactus $G$ such that
$
        |E(G)|=\frac r2(n-1).
$
Now we prove that $
        |E(G)|
        =
        \Omega\left(
        \frac{\log k}{\log\log k}\,n
        \right).
$  Since
$
        \frac12\frac{\log k}{\log\log k}\to +\infty
$ as $ k\to +\infty,
$
we have, for all sufficiently large $k$,
$
        \left\lfloor
        \frac12\frac{\log k}{\log\log k}
        \right\rfloor
        \ge
        \frac14\frac{\log k}{\log\log k}.
$
Thus
$
        m\ge \frac14\frac{\log k}{\log\log k}.
$
Since $r=m+2\ge m$, we obtain 
$
        r\ge m\ge\frac14\frac{\log k}{\log\log k}.
$
We may assume $n\ge 2$, and so
$
        n-1\ge \frac n2.
$
Therefore
$
        |E(G)|
        =
        \frac r2(n-1)
        \ge
        \frac r2\cdot \frac n2
        =
        \frac r4 n.
$
Using the lower bound on $r$, we get
$
        |E(G)|
        \ge
        \frac1{16}\frac{\log k}{\log\log k}\,n.
$
Hence
$
        |E(G)|
        =
        \Omega\left(
        \frac{\log k}{\log\log k}\,n
        \right).
$
This proves the proposition.
\end{proof}

By Proposition~\ref{cor:lower-log}, we even suspect that our upper bound can be further improved.

\begin{problem}
Determine the asymptotic order of the maximum number of edges in an
$n$-vertex $k$-cactus. In particular, does every $n$-vertex $k$-cactus
$G$ satisfy
$
        |E(G)|
        =
        O\!\left(\frac{\log k}{\log\log k}\,n\right)?
$
\end{problem}

\section*{Acknowledgment}

We claim that there is no conflict of interest in our paper. No data was used for the research
described in the article.
The work was supported by the National Natural Science Foundation of China
(Grant Nos. 12271157 and 12371346).


\begin{bibdiv}
\begin{biblist}
\bib{MR4874150}{book}{
   author={Diestel, Reinhard},
   title={Graph theory},
   series={Graduate Texts in Mathematics},
   volume={173},
   edition={6},
   publisher={Springer, Berlin},
   date={[2025] \copyright 2025},
   pages={xx+454},
   isbn={978-3-662-70106-5},
   isbn={978-3-662-70107-2},
   review={\MR{4874150}},
}
\bib{ElMallahColbourn1988}{article}{
   author={El-Mallah, Ehab},
   author={Colbourn, Charles J.},
   title={The complexity of some edge deletion problems},
   journal={IEEE Transactions on Circuits and Systems},
   volume={35},
   date={1988},
   number={3},
   pages={354--362},
   doi={10.1109/31.1748},
}
\bib{GenietGiocanti}{article}{
   author={Geniet, Colin},
   author={Giocanti, Ugo},
   title={Basis number of graphs excluding minors},
   date={2026},
   eprint={2601.05195},
   archive={arXiv},
   doi={10.48550/arXiv.2601.05195},
}

\bib{MR3599463}{article}{
   author={He, Fangguo},
   author={Zhu, Zhongxun},
   title={Cacti with maximum eccentricity resistance-distance sum},
   journal={Discrete Appl. Math.},
   volume={219},
   date={2017},
   pages={117--125},
   issn={0166-218X},
   review={\MR{3599463}},
   doi={10.1016/j.dam.2016.10.032},
}
\bib{MR5017744}{article}{
   author={Hu, Ran},
   author={Kanani, Divy H.},
   author={Zhang, Jingru},
   title={Computing the center of uncertain points on cactus graphs},
   journal={Theoret. Comput. Sci.},
   volume={1067},
   date={2026},
   pages={Paper No. 115761, 11},
   issn={0304-3975},
   review={\MR{5017744}},
   doi={10.1016/j.tcs.2026.115761},
}

\bib{MR4281871}{article}{
   author={Jiang, Yisheng},
   author={Lu, Mei},
   title={A note on the minimum inverse sum indeg index of cacti},
   journal={Discrete Appl. Math.},
   volume={302},
   date={2021},
   pages={123--128},
   issn={0166-218X},
   review={\MR{4281871}},
   doi={10.1016/j.dam.2021.06.011},
}
\bib{MR779891}{article}{
   author={Kostochka, A. V.},
   title={Lower bound of the Hadwiger number of graphs by their average degree},
   journal={Combinatorica},
   volume={4},
   date={1984},
   number={4},
   pages={307--316},
   issn={0209-9683},
   review={\MR{779891}},
   doi={10.1007/BF02579141},
}
\bib{MR3969328}{article}{
   author={Li, Shuchao},
   author={Zhang, Licheng},
   author={Zhang, Minjie},
   title={On the extremal cacti of given parameters with respect to the
   difference of zagreb indices},
   journal={J. Comb. Optim.},
   volume={38},
   date={2019},
   number={2},
   pages={421--442},
   issn={1382-6905},
   review={\MR{3969328}},
   doi={10.1007/s10878-019-00391-4},
}

\bib{MR1546002}{article}{
   author={Mac Lane, Saunders},
   title={A structural characterization of planar combinatorial graphs},
   journal={Duke Math. J.},
   volume={3},
   date={1937},
   number={3},
   pages={460--472},
   issn={0012-7094},
   review={\MR{1546002}},
   doi={10.1215/S0012-7094-37-00336-3},
}



\bib{MR735367}{article}{
   author={Thomason, Andrew},
   title={An extremal function for contractions of graphs},
   journal={Math. Proc. Cambridge Philos. Soc.},
   volume={95},
   date={1984},
   number={2},
   pages={261--265},
   issn={0305-0041},
   review={\MR{735367}},
   doi={10.1017/S0305004100061521},
}


\bib{MR1814910}{article}{
   author={Thomason, Andrew},
   title={The extremal function for complete minors},
   journal={J. Combin. Theory Ser. B},
   volume={81},
   date={2001},
   number={2},
   pages={318--338},
   issn={0095-8956},
   review={\MR{1814910}},
   doi={10.1006/jctb.2000.2013},
}
\bib{MR0332580}{article}{
   author={Toida, S.},
   title={Properties of a Euler graph},
   journal={J. Franklin Inst.},
   volume={295},
   date={1973},
   pages={343--345},
   issn={0016-0032},
   review={\MR{0332580}},
   doi={10.1016/0016-0032(73)90046-X},
}

\bib{Voblyi2025Eulerian3Cacti}{article}{
   author={Voblyi, V. A.},
   title={Enumeration of labeled Eulerian \(3\)-cacti},
   journal={J. Math. Sci.},
   volume={293},
   date={2025},
   pages={673--677},
   issn={1072-3374},
   doi={10.1007/s10958-025-08041-3},
}


\bib{West2001}{book}{
   author={West, Douglas B.},
   title={Introduction to Graph Theory},
   edition={2},
   publisher={Prentice Hall},
   address={Upper Saddle River, NJ},
   date={2001},
   isbn={0-13-014400-2},
   review={\MR{1367739}},
}
\bib{WuChenLi2026}{article}{
   author={Wu, Zeyuan},
   author={Chen, Hongzhang},
   author={Li, Jianxi},
   title={Maximizing the spectral radius of generalized cactus graphs},
   journal={RAIRO Oper. Res.},
   volume={60},
   date={2026},
   number={3},
   pages={1407--1418},
   issn={0399-0559},
   doi={10.1051/ro/2026035},
}

\bib{MR4208158}{article}{
   author={Zaman, Shahid},
   title={Cacti with maximal general sum-connectivity index},
   journal={J. Appl. Math. Comput.},
   volume={65},
   date={2021},
   number={1-2},
   pages={147--160},
   issn={1598-5865},
   review={\MR{4208158}},
   doi={10.1007/s12190-020-01385-w},
}




\bib{MR4700794}{article}{
   author={Zhang, Licheng},
   author={Huang, Yuanqiu},
   title={On the sizes of generalized cactus graphs},
   journal={Discrete Appl. Math.},
   volume={348},
   date={2024},
   pages={184--191},
   issn={0166-218X},
   review={\MR{4700794}},
   doi={10.1016/j.dam.2024.01.043},
}


\end{biblist}
\end{bibdiv}
\end{document}